\newtheorem{Thm}{Theorem}[section]
\newtheorem{Lem}[Thm]{Lemma}
\newtheorem{Prop}[Thm]{Proposition}
\newtheorem{Cor}[Thm]{Corollary}
\theoremstyle{definition}
\newtheorem{Rem}[Thm]{Remark}
\newtheorem{Exa}[Thm]{Example}
\begin{document}

\title{On regularity for de Rham's functional equations\footnote{MSC subject classification : 39B52, 39B22, 26A27, 26A30}}
\author{Kazuki Okamura\footnote{{\sc Research Institute for Mathematical Sciences, Kyoto University, Kyoto 606-8502, JAPAN.} e-mail : \texttt{kazukio@kurims.kyoto-u.ac.jp} }}
\maketitle

\begin{abstract}
We consider regularity for solutions of a class of de Rham's functional equations. 
Under some smoothness conditions of functions consisting the equation,  
we improve some results in Hata (Japan J. Appl. Math. 1985).    
Our results are applicable to some cases that the functions consisting the equation are non-linear functions on an interval, 
specifically, polynomials and  linear fractional transformations.  
Our results imply singularity of some well-known singular functions, in particular, Minkowski's question-mark function, and, some small perturbed functions of the singular functions.  
\end{abstract}

\section{Introduction and Main results}  

De Rham \cite{dR}\footnote{An English translation of \cite{dR} is included in Edger \cite{E}.} considered a certain class of functional equations. 
Solutions of de Rham's functional equations give parameterizations of some self-similar sets such as the Koch curve and the P\'olya curve, etc.   
Some singular functions\footnote{A continuous increasing function on $[0,1]$ whose derivative is zero Lebesgue-a.s.} 
such as the Cantor, Lebesgue, etc. functions are solutions of such functional equations. 
The survey by Kairies \cite{K} and the monograph by Kannappan \cite[Chapter 14.4]{Kan} 
study functional equations including de Rham's ones.

Let $X$ be a metric space and consider the following functional equation for $G : [0,1] \to X$ : 
\begin{equation}\label{Def}
G(t) = f_i (G(mt - i)),  \  \frac{i}{m} \le t \le \frac{i+1}{m}, \, 0 \le i \le m-1.  
\end{equation}
Here $f_i : X \to X$, $0 \le i \le m-1$, are weak contractions such that $f_{i-1}(\text{Fix}(f_{m-1})) = f_i(\text{Fix}(f_0))$ for any $i$, and, $\text{Fix}(f_i)$ denotes a fixed point of $f_i$.  
We mainly follow Hata \cite[Sections 6 and 7]{Ha} for a framework of de Rham's functional equations\footnote{\cite{Ha} considers a more general setting than ours.}.   
\cite{dR} considers the case that $X = \mathbb{R}^2$ and $m = 2$ mainly.   
\cite[Sections 6 and 7]{Ha} shows that a unique continuous solution $G$ of (\ref{Def}) exists, and,  
under some conditions for $X$ and $f_i$ s, 
some regularity results such as the H\"older continuity, variation, and differentiability of the solution $G$ are obtained.         

In this paper, we improve some regularity results of $G$ obtained in \cite[Section 7]{Ha},  
if $f_i$ s are Fr\'echet differentiable and 
the Fr\'echet derivative $Df_i(x)$ of $f_i$ at $x$ is uniformly continuous with respect to $x$.   
See (A-i) to (A-iv) in the following subsection for precise assumptions. 
It seems that these assumptions are natural and not restrictive.

Our main results are applicable to some cases to which 
the results in \cite[Section 7]{Ha} are not applicable.     
\cite[(7.3) in Theorem 7.3]{Ha} corresponds to $\prod_{i=0}^{m-1} \max_{x \in X} \|Df_i (x)\| < m^{-m}$, and, 
\cite[(7.6) in Theorem 7.5]{Ha} corresponds to $\prod_{i=0}^{m-1} \min_{x \in X} \|Df_i (x)\| > m^{-m}$ 
If $f_i$ s are not linear, 
then, it can occur that $\prod_{i=0}^{m-1} \max \|Df_i\| \ge m^{-m} \ge \prod_{i=0}^{m-1} \min \|Df_i\|$.    

We state some examples of the cases that $f_i$ s are not linear and our main results are applicable.     
Previously, in \cite{O1} and \cite{O2}, the author considered some regularities of $G$ 
if $X = [0,1]$, $m=2$,  and, $f_i$ s are certain linear fractional transformations on $X$. 
$\max \|Df_0\| \max \|Df_1\| \ge 1/4 \ge \min \|Df_0\| \min \|Df_1\|$ can occur. 
By \cite{dR}, 
the inverse function of Minkowski's question-mark function is the solution of (\ref{Def}) for the case that $X = [0,1]$, $m=2$, $f_0(x) := x/(x+1)$ and $f_1(x) := 1/(2-x)$.          
Then, $\max \|Df_0\| \max \|Df_1\|  = 1$ and $\min \|Df_0\| \min \|Df_1\| = 1/4$.  
This case is in the framework of \cite{O2}.  
See Example \ref{Exa-2} (iii) for details.  
Our main results are also applicable to some examples which are even {\it outside} the framework of \cite{O1} and \cite{O2}.   
In Example \ref{Exa-2} (i) (resp. (ii)), 
we consider the case that $X = [0,1]$, $m=2$, and, $f_0$ is a polynomial with degrees $2$ (resp. $3$).     
We also consider an example such that $X = \mathbb{R}^2$ in Example \ref{exa-nonlinear}.    

If $X = [0,1]$, then, 
we can show singularity for some well-known singular functions such as the Cantor, Lebesgue and Minkowski functions etc.,  
by regarding them as solutions of a certain class of de Rham's functional equations and considering regularity of the solutions.        
Thanks to the approach, 
we can also show singularity of some slightly {\it perturbed} functions of the singular functions.

\subsection{Framework and main results}
 
We mainly follow \cite[Sections 6 and 7]{Ha} for notation.  
Let $X$ be a closed subset of a separable\footnote{Separability of $E$ is not assumed in \cite{Ha}, but here we assume it.} Banach space $E$ such that the interior of $X$ is non-empty.  
Let $B(E, E)$ be the set of linear bounded transformations on $E$.  
Let $| \cdot |$ be the norm of $E$.   
Let $m \ge 2$.  
Let $f_i, i = 0,1, \dots, m-1$, be functions from $X$ to $X$ 
such that \\
(A-i) $f_i$ is a weak contraction on $X$ for each $i$. \\
(A-ii) The $D$-conditions in \cite{Ha} hold. 
Specifically, 
$f_i(\text{Fix}(f_{m-1})) = f_{i+1}(\text{Fix}(f_0))$, $0 \le i \le m-2$. 

In this paper the following conditions are assumed.  \\
(A-iii) (Differentiability) The Fr\'echet derivative of $f_i$ at $x \in X$, which is denoted by $D f_i (x) \in B(E, E)$, exists for any $x \in X$. \\
(A-iv) (Continuity of the derivative) For each $i$, $D f_i(x)$ is uniformly continuous on $X$ with respect to the operator norm $\| \cdot \|$ of $B(E, E)$.  

In this paper, a map $f : X \to X$ is called {\it linear}  
if there is a $D\in B(E, E)$ such that $Df(x) = D$ holds for any $x \in X$; otherwise it is called {\it non-linear}.      
Since $f_i$ s are weak contractions, 
we have that $\| D f_i (x) \| \le 1$ for any $x$ and $i$.    
Thanks to \cite[Theorem 6.5]{Ha}\footnote{This holds if $X$ is a complete separable metric space as in \cite[Section 6]{Ha}. 
However, since we consider regularity of $G$, as in \cite[Section 7]{Ha}, 
we restrict $X$ to a closed subset of a (separable) Banach space.}, 
there exists a unique continuous solution $G$ of (\ref{Def}) such that $G(0) = \text{Fix}(f_0)$ and $G(1) = \text{Fix}(f_{m-1})$.      
Since we assume (A-iii) and (A-iv), 
our framework is less general than the framework of \cite{Ha}.
However, our framework contains the cases that $f_i$ s are linear and the framework of \cite{O1} and some parts of \cite{O2}.

Let $m \ge 2$.  
$\log_m$ denotes the logarithm with base $m$ and let $\log_m (0) := -\infty$ and $\log_m (+\infty) = +\infty$.     
Let $D_m := \cup_{n \ge 1} \{i/m^n : 0 \le i \le m^n-1\}$.    
Let $\ell$ be the Lebesgue measure on $[0,1)$.    
Let $\text{Lip}(g)$ be the Lipschitz constant of a function $g : X \to X$.

Let $t = \sum_{n \ge 1} A_n(t)/m^n$
be the $m$-adic expansion of $t \in [0,1)$.  
We always assume that the number of $n$ such that $A_n(t) \ne m-1$ is infinite. 
Let $t_n := \sum_{k = 1}^{n} A_k(t)/m^k$.  
Let\footnote{$A_1$ corresponds to $A$ in \cite[Section 6]{Ha}, but there is a slight difference. $H$ corresponds to $H$ in \cite[Section 6]{Ha}.} $Ht := \sum_{n \ge 1} A_{n+1}(t)/m^n$.  
Let $M_{n}(t) := |G(t_n + m^{-n}) - G(t_n)|$.   
Let 
\[ \alpha := \int_{0}^{1} -\log_m \left\| D f_{A_1(t)} (G(Ht)) \right\| \ell (dt), \text{ and, } \]
\[\beta := \int_{0}^{1} \log_m \left\| (D f_{A_1(t)} (G(Ht)))^{-1} \right\| \ell (dt). \]
(If $Df_{i}(x)$ is not invertible, then, we let $\| (D f_{i} (x))^{-1} \|$ be $+\infty$.)
Since $E$ is separable and $\left|Df_i(x)(v)\right|$ is continuous with respect to $x$ for any $v \in E$, we have that 
$\|(Df_{i}(x))^{-1}\|^{-1}$ is a continuous function with respect to $x$.
Hence, $\| (D f_{A_1(t)} (G(Ht)))^{-1} \|$ is measurable as a function of $t$.     

We have that $0 \le \alpha \le \beta$.  
$\alpha < \beta$ can occur (See Remark \ref{ineq} for details.).  
We remark that if $\text{Fix}(f_0) = \text{Fix}(f_{m-1})$, then, $G$ is constant, and hence, $\alpha = \beta = +\infty$.

\begin{Thm}\label{Basic}
We have that 
\begin{equation}\label{lower}
\liminf_{n \to \infty} \frac{-\log_m |M_{n}(t)| }{n} \ge \alpha,  \, \text{ $\ell$-a.s. $t$, and, }  
\end{equation}
\begin{equation}\label{upper}  
\limsup_{n \to \infty} \frac{-\log_m |M_{n}(t)| }{n} \le \beta,  \, \text{ $\ell$-a.s. $t$. }  
\end{equation}  
\end{Thm}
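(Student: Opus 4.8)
The plan is to derive a multiplicative recursion for $M_n(t)$ from the functional equation (\ref{Def}), linearize it using (A-iii) and (A-iv), and extract the exponential growth rate via Birkhoff's ergodic theorem applied to the shift $H$. First I would record the exact recursion. Since $t_n \in [A_1(t)/m, (A_1(t)+1)/m]$ and, writing $s = Ht$, one checks that $m t_n - A_1(t) = s_{n-1}$ and $m(t_n + m^{-n}) - A_1(t) = s_{n-1} + m^{-(n-1)}$, equation (\ref{Def}) gives
\begin{equation*}
G(t_n) = f_{A_1(t)}(G(s_{n-1})), \qquad G(t_n + m^{-n}) = f_{A_1(t)}(G(s_{n-1} + m^{-(n-1)})).
\end{equation*}
Hence $M_n(t) = |f_{A_1(t)}(b) - f_{A_1(t)}(a)|$ with $a = G(s_{n-1})$, $b = G(s_{n-1}+m^{-(n-1)})$ and $|b-a| = M_{n-1}(Ht)$.

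Next, (A-iii) and (A-iv) yield a uniform modulus $\omega$ with $\omega(0^+)=0$ and $\omega \le 2$ such that $|f_i(b) - f_i(a) - Df_i(a)(b-a)| \le \omega(|b-a|)\,|b-a|$ for all $i$ and all admissible $a,b$. Combined with $\|(Df_i(a))^{-1}\|^{-1}|v| \le |Df_i(a)(v)| \le \|Df_i(a)\|\,|v|$, this gives
\begin{equation*}
\bigl(\|(Df_{A_1(t)}(a))^{-1}\|^{-1} - \omega(M_{n-1}(Ht))\bigr) M_{n-1}(Ht) \le M_n(t) \le \bigl(\|Df_{A_1(t)}(a)\| + \omega(M_{n-1}(Ht))\bigr) M_{n-1}(Ht).
\end{equation*}
Writing $\tilde\phi_k(\tau) := -\log_m(\|Df_{A_1(\tau)}(G((H\tau)_{k-1}))\| + \omega(M_{k-1}(H\tau)))$, taking $-\log_m$ and iterating (using that $M_0 \equiv |G(1)-G(0)|$ is a nonzero constant off the degenerate case) gives $-\log_m M_n(t) \ge \sum_{j=0}^{n-1}\tilde\phi_{n-j}(H^j t) - \log_m M_0$, and symmetrically an upper bound with $\tilde\psi_k(\tau) := -\log_m(\|(Df_{A_1(\tau)}(G((H\tau)_{k-1})))^{-1}\|^{-1} - \omega(M_{k-1}(H\tau)))$. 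Because $G$ is continuous, $(H\tau)_{k-1}\to H\tau$ and $M_{k-1}(H\tau)\to 0$; using continuity of $x \mapsto \|Df_i(x)\|$ and of $x\mapsto \|(Df_i(x))^{-1}\|^{-1}$ one gets $\tilde\phi_k \to \phi$ and $\tilde\psi_k \to \psi$ pointwise, where $\phi(t) = -\log_m\|Df_{A_1(t)}(G(Ht))\|$ and $\psi(t) = \log_m\|(Df_{A_1(t)}(G(Ht)))^{-1}\|$, so that $\int\phi\,d\ell = \alpha$ and $\int\psi\,d\ell = \beta$.

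The remaining issue is that the summand $\tilde\phi_{n-j}(H^j t)$ depends on the index $n-j$, so Birkhoff's theorem does not apply directly. To handle this I would truncate: put $\phi_K := \inf_{k \ge K}\tilde\phi_k$ and $\psi_K := \sup_{k\ge K}\tilde\psi_k$, which increase (resp. decrease) to $\phi$ (resp. $\psi$). For $j \le n-K$ one has $\tilde\phi_{n-j}(H^j t) \ge \phi_K(H^j t)$, while the at most $K$ boundary terms are bounded below by the constant $-\log_m 3$ (as $\|Df_i\|\le 1$ and $\omega \le 2$), so after dividing by $n$ their contribution vanishes. Since $H$ is the measure-preserving, ergodic map $t \mapsto m t \bmod 1$, Birkhoff's theorem gives $\tfrac1n\sum_{j=0}^{n-K}\phi_K(H^j t) \to \int\phi_K\,d\ell$ a.s., whence $\liminf_n \tfrac{-\log_m M_n(t)}{n}\ge \int\phi_K\,d\ell$ a.s.; letting $K\to\infty$ and using monotone convergence yields (\ref{lower}). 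The bound (\ref{upper}) follows by the symmetric argument with $\psi_K$ and $\limsup$.

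The main obstacle I anticipate is the index-dependence just discussed together with the remainder term: the lower factor $\|(Df_{A_1(t)}(a))^{-1}\|^{-1} - \omega(M_{n-1}(Ht))$ can fail to be positive either when the derivative is non-invertible (then $\psi = +\infty$ on that set, so $\beta = +\infty$ and (\ref{upper}) is vacuous there) or before $M_{n-1}(Ht)$ has become small, so one must argue that, off a null set, this factor is eventually positive whenever $\psi(t) < \infty$; this is exactly what the truncation by $\psi_K$ together with $\tilde\psi_k\to\psi$ supplies. A secondary technical point is the uniform differentiability estimate defining $\omega$, which rests on a mean-value inequality along the increment $b-a$: this is immediate when $X$ is convex (as in the interval examples) and otherwise requires the relevant segments joining points of $G([0,1])$ to lie in $X$, a fact I would record as a direct consequence of (A-iii) and (A-iv).
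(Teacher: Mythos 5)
Your proposal is correct and follows essentially the same route as the paper: the paper's Lemma 3.1 is exactly your linearized recursion $M_{n+1}(t) \approx \|Df_{A_1(t)}(G(Ht))\|\,M_n(Ht)$ obtained from $G(t)=f_{A_1(t)}(G(Ht))$ via the mean-value estimate and uniform continuity of $Df_i$, and the paper likewise concludes by iterating, applying Birkhoff's theorem to a truncated integrand (there $-\log_m(I\vee\delta)$, in your version $\phi_K=\inf_{k\ge K}\tilde\phi_k$), and passing to the limit by monotone convergence. The only differences are organizational (where the uniform $o(1)$ error is absorbed), and your remark about the segment joining $G((Ht)_n)$ and $G((Ht)_n+m^{-n})$ lying in $X$ is a point the paper's mean-value integral also implicitly uses.
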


If $\alpha > 1$ or $\beta < 1$, then, singularity for the solution $G$ occurs. 
Precisely,  
\begin{Thm}[Differentiability]\label{Dif}  
We have that\\
(i) If $\alpha > 1$, the Fr\'echet derivative of the solution $G$ is zero, $\ell$-a.s.\\  
(ii) If $\beta < 1$, then, the Fr\'echet derivative of $G$ does not exist, $\ell$-a.s.  
\end{Thm}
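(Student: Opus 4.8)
The plan is to derive both parts from Theorem~\ref{Basic}, which controls the endpoint increments $M_n(t)$, by first upgrading that control to control of the full oscillation $\mathrm{diam}\,G(I_n(t))$ over the dyadic interval $I_n(t):=[t_n,t_n+m^{-n}]$, and then passing from dyadic increments to arbitrary increments $G(t+h)-G(t)$. Throughout write $\Phi_{n,t}:=f_{A_1(t)}\circ\cdots\circ f_{A_n(t)}$ and $K:=G([0,1])$; iterating (\ref{Def}) gives $G(I_n(t))=\Phi_{n,t}(K)$, while $M_n(t)=|\Phi_{n,t}(G(1))-\Phi_{n,t}(G(0))|$ with $G(0)=\mathrm{Fix}(f_0)$ and $G(1)=\mathrm{Fix}(f_{m-1})$.

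Part (ii) needs no oscillation estimate. Suppose $G$ is Fr\'echet differentiable at $t$; since the domain is one-dimensional the derivative is a vector $v=G'(t)\in E$ with $G(t+s)-G(t)=sv+o(|s|)$. Evaluating at $s=t_n+m^{-n}-t$ and $s=t_n-t$, both of modulus at most $m^{-n}$, and subtracting yields $M_n(t)=|G(t_n+m^{-n})-G(t_n)|=m^{-n}(|v|+o(1))$, whence $\liminf_{n}(-\log_m M_n(t))/n\ge 1$. But Theorem~\ref{Basic} gives $\limsup_{n}(-\log_m M_n(t))/n\le\beta$ for $\ell$-a.e.\ $t$. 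If $\beta<1$ these are incompatible except on an $\ell$-null set, so $G$ fails to be differentiable $\ell$-a.s.

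For part (i), fix $\epsilon\in(0,\alpha-1)$. By Theorem~\ref{Basic}, for $\ell$-a.e.\ $t$ there is $N(t)$ with $M_n(t)\le m^{-(1+\epsilon)n}$ for $n\ge N(t)$. The decisive step is a bounded-distortion estimate showing that $\mathrm{diam}\,G(I_n(t))$ and $M_n(t)$ agree up to a subexponential factor: both measure the local stretching of $\Phi_{n,t}$ on $K$, and since $G$ is non-constant we have $|G(1)-G(0)|=|\mathrm{Fix}(f_{m-1})-\mathrm{Fix}(f_0)|>0$, so $\mathrm{diam}\,G(I_n(t))\le m^{o(n)}M_n(t)\le m^{-(1+\epsilon)n+o(n)}$. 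The same estimate bounds the oscillation over the dyadic interval adjacent to $I_n(t)$ by a subexponential multiple of $M_n(t)$, the extra factor coming only from a run of the digit $m-1$ preceding level $n$, which is $O(\log n)$ for $\ell$-a.e.\ $t$ and hence subexponential. Now let $|h|$ be small and choose $n$ with $m^{-n-1}\le|h|<m^{-n}$; then $t$ and $t+h$ lie in $I_n(t)$ together with at most one adjacent level-$n$ interval, so $|G(t+h)-G(t)|$ is at most the sum of the two oscillations, i.e.\ $\le m^{-(1+\epsilon)n+o(n)}$. Dividing by $|h|\ge m^{-n-1}$ gives $|G(t+h)-G(t)|/|h|\le m^{-\epsilon n+o(n)}\to 0$, so $G'(t)=0$ for $\ell$-a.e.\ $t$.

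The main obstacle is exactly the bounded-distortion step: relating the full oscillation of $G$ over a dyadic interval (and over its neighbour) to the endpoint increment $M_n(t)$ that Theorem~\ref{Basic} controls. For linear $f_i$ the comparison is an exact constant, but in the non-linear case one must control the accumulated variation of $Df_{A_j(t)}$ along the geometrically shrinking pieces $f_{A_{j+1}(t)}\circ\cdots\circ f_{A_n(t)}(K)$, and this is precisely where assumption (A-iv), the uniform continuity of the derivatives, is needed to keep the distortion subexponential. The remaining points—the $\ell$-null set of $m$-adic rationals, and the passage through the neighbouring interval across a digit run—are handled by standard typical-digit-frequency and Borel–Cantelli arguments.
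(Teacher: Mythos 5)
Your part (ii) is correct and is essentially the paper's argument read in contrapositive: differentiability at $t$ forces $\liminf_{n}(-\log_m M_n(t))/n\ge 1$, which is incompatible with (\ref{upper}) when $\beta<1$. (The paper instead observes that $m^nM_n(t)$ is dominated by one of the two one-sided difference quotients at $t$, so $m^nM_n(t)\to+\infty$ already rules out the derivative; same content.)

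Part (i) has a genuine gap at exactly the step you call decisive. The bounded-distortion claim $\operatorname{diam}G(I_n(t))\le m^{o(n)}M_n(t)$ is not a consequence of (A-iii)--(A-iv), and in the generality of the theorem it is false. Uniform continuity of $Df_i$ controls how much $Df_i$ varies over the shrinking sets $f_{A_{j+1}(t)}\circ\cdots\circ f_{A_n(t)}(K)$, but it says nothing about the \emph{anisotropy} of the derivative cocycle, which is what governs the ratio between $\operatorname{diam}\Phi_{n,t}(K)$ and the distance between the images of the two particular points $G(0),G(1)$. When $\|Df_i(x)\|$ and $\|(Df_i(x))^{-1}\|^{-1}$ differ (Remark \ref{ineq} shows $\alpha<\beta$ can occur), the two-point increment $M_n(t)$ can decay like $m^{-\beta n}$ while the diameter only decays like $m^{-\alpha n}$, so the correction factor is exponential, not subexponential. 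Even in dimension one the comparison fails as soon as some $f_i$ is non-monotone: then $\Phi_{n,t}(G(0))$ and $\Phi_{n,t}(G(1))$ need not be extreme points of the image interval, and $M_n(t)$ can be far smaller than the oscillation. Note also that the $f_i$ are not assumed invertible ($Df_0(0)=0$ in Example \ref{Exa-2}(i)), which blocks any Koebe-type distortion argument.

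The repair --- and the paper's actual route --- is to bypass $M_n(t)$ and Theorem \ref{Basic} entirely in part (i). Set $I_{\epsilon}(t):=\sup\{\|Df_{A_1(t)}(x)\|:x\in X,\ |x-G(Ht)|\le\epsilon\}$; by monotone convergence $\int_0^1-\log_m I_{\epsilon}\,d\ell\to\alpha$ as $\epsilon\downarrow0$, so one fixes $\epsilon_0$ with $\int_0^1-\log_m I_{\epsilon_0}\,d\ell>(\alpha+1)/2>1$ and applies the ergodic theorem to $I_{\epsilon_0}$ itself. For $s$ sharing its first $n(t,s)$ digits with $t$ one then iterates the mean value theorem along the common prefix, bounding each factor by $I_{\epsilon_0}(H^jt)$ for $j\le n(t,s)-n(t,s)^{1/2}$ (the last few indices are discarded so that $|G(H^js)-G(H^jt)|\le\epsilon_0$ holds), and one lower-bounds $|t-s|$ via $m$-normality of $t$. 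This controls $|G(s)-G(t)|/|s-t|$ for arbitrary $s\to t$ directly and makes your adjacent-interval and digit-run considerations unnecessary. Your skeleton survives only if the distortion claim is replaced by the bound $\operatorname{diam}G(I_n(t))\le\operatorname{diam}(K)\prod_jI_{\epsilon_0}(H^jt)$ over the last $n-o(n)$ factors, i.e.\ by estimating the oscillation with the same product of enlarged derivative norms rather than by comparison with $M_n(t)$.
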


Under the assumptions (A-iii) and (A-iv),     
we can weaken the assumptions of \cite[Theorems 7.3 and 7.5]{Ha}. 
Specifically, if $\prod_{i=0}^{m-1} \max_{x} \|D f_i(x)\| < m^{-m}$, that is, \cite[(7.3)]{Ha} holds, then, $\alpha > 1$. 
If $\prod_{i=0}^{m-1} \min_{x} \|D f_i(x)\| > m^{-m}$, that is, \cite[(7.6)]{Ha} holds,  then, $\beta < 1$.  

If $X = [0,1]$ and the solution $G$ is an absolutely continuous function, then, $\alpha$ must be equal to $1$. 
However, there is an example such that $\alpha = 1$ but the solution $G$ is not differentiable almost everywhere.  
See Example \ref{exa-1} (iii) for details.   

Calculating $\alpha$ and $\beta$ is not easy, 
because the integrals in the definitions of $\alpha$ and $\beta$ contain the solution $G$, which can be a fractal function.  
However, we can give satisfiable estimates for the integrals for some cases including the cases that $f_i$ s are linear.  
See Section 2 for such estimates.    

Theorem \ref{Basic} implies the following : 
\begin{Cor}[Variation]\label{Var}
Assume that $\beta < +\infty$.  
If $p < 1/\beta$, then, 
the solution $G$ is not of bounded $p$-variation. 
\end{Cor}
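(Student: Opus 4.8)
The plan is to exploit the exact identity relating the $p$-variation of $G$ over the uniform $m$-adic partition at level $n$ to a Lebesgue integral of $M_n(t)^p$, and then to feed in the pointwise lower bound on $M_n(t)$ coming from the $\limsup$ estimate (\ref{upper}) of Theorem \ref{Basic}. Throughout I assume $p>0$ and use the standard convention that $G$ fails to be of bounded $p$-variation precisely when $\sup \sum_k |G(s_{k+1})-G(s_k)|^p = +\infty$, the supremum being over all partitions $0 = s_0 < \dots < s_N = 1$.

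First I would note that for $t$ in the $m$-adic interval $[j/m^n, (j+1)/m^n)$ one has $t_n = j/m^n$, so that $M_n(t) = |G((j+1)/m^n) - G(j/m^n)|$ is constant on each such interval. Summing the $p$-th powers over the $m^n$ intervals of level $n$ and using that each has $\ell$-measure $m^{-n}$ gives the identity
\[ \sum_{j=0}^{m^n - 1} \bigl|G((j+1)/m^n) - G(j/m^n)\bigr|^p = m^n \int_0^1 M_n(t)^p \, \ell(dt). \]
Since the left-hand side is the $p$-variation sum over one particular partition, the $p$-variation of $G$ is bounded below by $m^n \int_0^1 M_n(t)^p \, \ell(dt)$ for every $n$, hence by the $\limsup$ over $n$ of this quantity.

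Next I would convert (\ref{upper}) into a usable pointwise bound. I fix $\varepsilon > 0$ with $p(\beta + \varepsilon) < 1$, which is possible exactly because $p < 1/\beta$. By (\ref{upper}), for $\ell$-a.e.\ $t$ there is $N(t)$ with $M_n(t) \ge m^{-(\beta+\varepsilon)n}$ for all $n \ge N(t)$. Then for such $t$,
\[ m^n M_n(t)^p \ge m^{\,n(1 - p(\beta+\varepsilon))} \to +\infty \quad (n \to \infty), \]
so $\liminf_{n} m^n M_n(t)^p = +\infty$ for $\ell$-a.e.\ $t$. Applying Fatou's lemma to the nonnegative functions $m^n M_n(t)^p$ yields $\liminf_n m^n \int_0^1 M_n(t)^p \, \ell(dt) = +\infty$, and combining this with the lower bound from the previous paragraph shows the $p$-variation of $G$ is infinite, which is the claim.

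The main subtlety is that (\ref{upper}) holds only $\ell$-a.s.\ and with a threshold $N(t)$ depending on $t$, so one cannot directly substitute a uniform lower bound for $M_n(t)$ into the integral; applying Fatou's lemma to the pointwise $\liminf$ is exactly what circumvents this non-uniformity. The choice of $\varepsilon$ making $1 - p(\beta+\varepsilon) > 0$ is the only place the hypothesis $p < 1/\beta$ is used, and the assumption $\beta < +\infty$ is what makes this choice meaningful.
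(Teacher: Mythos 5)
Your proposal is correct and follows essentially the same route as the paper: the same identity $\sum_{k}|G(k/m^n)-G((k-1)/m^n)|^p = m^n\int_0^1 M_n^p\,d\ell$, the same choice of $\varepsilon$ with $p(\beta+\varepsilon)<1$, and the same use of (\ref{upper}); the only cosmetic difference is that you pass from the a.s.\ pointwise divergence to divergence of the integral via Fatou's lemma, while the paper instead bounds the integral from below on the set $\{M_n \ge m^{-n(\beta+\varepsilon)}\}$, whose measure tends to $1$.
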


This corollary corresponds to \cite[Theorem 7.2]{Ha}. 
However, there is an example such that \cite[(7.2) in Theorem 7.2]{Ha} fails but $p < 1/\beta$ holds. 
See Example \ref{Exa-2} (iii) for details.

If the solution $G$ is of bounded ($1$-)variation, then, $\beta \ge 1$.   
It depends on settings whether $G$ is of bounded $1/\beta$-variation. 
There is an example such that $G$ is of bounded $1/\beta$-variation, and, on the other hand, 
there is also an example such that $G$ is not of bounded $1/\beta$-variation. 
See Example \ref{exa-1} (iii) for details.

This paper is organized as follows. 
In Section 2.1 (resp. 2.2), we consider examples for one-(resp. two-)dimensional cases. 
In Section 2.3, we discuss perturbations of solutions. 
In Section 3, we give proofs of Theorems \ref{Basic} and \ref{Dif}.  
In Section 4, we state some remarks.


\section{Examples}  

\subsection{One-dimensional cases}  

In this subsection we let $X := [0,1]$ and $E := \mathbb{R}$. 
If each $f_i$ is an increasing function, 
then, by noting (\ref{Def}), the solution $G$ is increasing, and,   
$G$ is the distribution function of a probability measure $\mu$ on $[0,1]$. 

The solution $G$ is a singular continuous function 
if and only if the measure $\mu_G$ whose distribution function is $G$ is singular with respect to the Lebesgue measure. 
(See Riesz and Sz.-Nagy \cite[Section 25]{RSz} for details.)   

If $X \subset \mathbb{R}$, then since $\|Df_{i}(x)\| = \|Df_{i}(x)^{-1}\|^{-1}$,  we have $\alpha = \beta$.  

\begin{Exa}[linear cases]\label{exa-1}  
We have \\
(i) (The Cantor functions)  
Let $m = 3$ and $f_0(x) = x/2$ and $f_1(x) = 1/2$ and $f_2(x) = (x+1)/2$.   
Then, the solution $G$ is the Cantor function and $\alpha = +\infty$. \\
(ii) (The Bernoulli measures; the Lebesgue singular functions)
If $f_{i}(x) = a_i x$ for some $a_{0}, \dots, a_{m-1} \in (0,1)$ satisfying $\sum_{i=0}^{m-1} a_i = 1$.   
Then, $\alpha = \beta = -\sum_{i=0}^{m-1} (\log_m a_i) / m$.   
$\alpha \ge 1$, and,  
$\alpha = 1$ if and only if $a_{i} = 1/m$ for each $i$.  
Let $\mu_G$ be the probability measure on $[0,1]$ whose distribution function is the solution $G$. 
Then, $\mu_G$ is absolutely continuous if $a_{i} = 1/m$ for each $i$, and, singular otherwise. 
If $m = 2$ and $(a_1, a_2) \ne (1/2, 1/2)$, then, $G$ is the Lebesgue singular function. \\  
(iii) (Functions in Okamoto \cite{Okamoto-1} and Okamoto and Wunsch \cite{OW})    
Let $0 < a < b < 1$.    
If $m = 3$, $f_0(x) = ax$, $f_1(x) = a + (b-a)x$, and, $f_2(x) = (1-b)x + b$, then, 
the solution $G$  is identical with  $f_{a, b}$ in \cite{OW}.     
If $a = b = 1/2$, $f_{a, b}$ is the Cantor function. 
\cite[Theorem 5]{OW} states that $f_{a,b}$ is continuous, strictly increasing and singular if $(a, b) \ne (1/3, 2/3)$.  

We have $\alpha = -\log_3 (a (b-a) (1-b)) / 3$ and hence $\alpha > 1$ if and only if $(a, b) \ne (1/3, 2/3)$.       
Therefore, Theorems \ref{Basic} and \ref{Dif} immediately imply that $f_{a,b}$ is singular if $(a, b) \ne (1/3, 2/3)$.      
(It is easy to see that $f_{a,b}$ is continuous and strictly increasing.)   

We remark that even if $\alpha = 1$, $G$ can be a non-smooth curve.   
Let $a_0 \in (1/2, 1)$ be a unique parameter $a$ such that $\alpha = 1$ for $f_{a_0, 1-a_0}$.   
Then, by using Kobayashi \cite{Ko}, $f_{a_0, 1 - a_0}$ is non-differentiable a.e. and hence $f_{a_0, 1 - a_0}$ is not absolutely continuous. 
It is easy to see that $G$ is not of bounded $1$-variation. 
\end{Exa}

The following examples are outside  the framework of \cite{O1}.  

\begin{Exa}[non-linear cases]\label{Exa-2}
We have \\
(i) 
Let $f_0(x) := x^{2}/2$ and $f_1(x) := (x + 1)/2$.     
We have that $\text{Lip}(f_0)\text{Lip}(f_1) = 1/2 > 1/4$ and $\min_{x}|Df_{0}(x)| \min_{x}|Df_{1}(x)|  = 0 < 1/4$.  
This does not satisfy the assumption of \cite[Theorem 7.3]{Ha}. 
We can show that $\alpha = +\infty$. 
See Figure 1 below for the graph of $G$. \\    
(ii)   
Let $f_0(x) := x^3/4 + x/12$ and $f_1(x) := (2x + 1)/3$.     
We have that $\text{Lip}(f_0)\text{Lip}(f_1) = 5/9 > 1/4$ and $\min_{x}|Df_{0}(x)| \min_{x}|Df_{1}(x)|  = 1/18 < 1/4$.  
This does not satisfy the assumption of \cite[Theorem 7.3]{Ha}. 
\[ \alpha = \beta = \frac{1}{2} \int_{0}^{1} - \log_2 \left(\frac{G(t)^2}{2} + \frac{1}{18}\right) \ell (dt) \ge \frac{1}{4} \log_2 \frac{81}{5} >  1. \] 
By using Theorem \ref{Dif}, $DG(t) = 0$, $\ell$-a.s. $t$.  
See Figure 2 below for the graph of $G$.\\
(iii) (The inverse function of Minkowski's question-mark function)     
Let $f_0(x) := x/(x+1)$ and $f_1(x) := 1/(2-x)$.  
They are weak contractions, and, $\text{Lip}(f_0) = \text{Lip}(f_1) = 1$.  
Let $G$ be the solution of (\ref{Def}). 
Since $G$ is continuous and strictly increasing, $G(0) = 0$ and $G(1) = 1$, 
we see that $\ell(0 < G < 1) > 0$. 
Hence, 
\[ \alpha = \beta = \int_{0}^{1} \log_2 (2 + G(t) - G(t)^2) \ell (dt) > 1. \]  
Theorem \ref{Dif} implies that $\mu_G$ is singular. 

\[ \text{Lip}(f_0^{-1})^{-p} + \text{Lip}(f_1^{-1})^{-p} = \min_{x \in [0,1]} |Df_0(x)|^p + \min_{x \in [0,1]} |Df_1(x)|^p > 1. \]
holds, that is, the assumption of \cite[(7.2) in Theorem 7.2]{Ha} holds, if and only if $p < 1/2$. 
\cite[Theorem 7.2]{Ha} implies that $G$ is not of bounded $p$-variation if $p < 1/2$.     
Since $\alpha < 2$, $1/2 \le p < 1/\alpha < 1$ can occur.  
By using Corollary \ref{Var}, $G$ is not of bounded $p$-variation if $p < 1/\alpha$.

By using this and \cite[Lemma 4.2]{O1},
the inverse function of $G$, which is identical with Minkowski's question-mark function,  
is also a singular function.  
We can also show the some small perturbed functions of Minkowski's function are also singular. 
See Example \ref{perturb} (ii) below. 
\end{Exa}

We give some comments about Example \ref{Exa-2} (iii).  

\begin{Rem}
(i) Denjoy \cite{De} and Salem \cite{Sa} considered singularity of Minkowski's function. 
In order to show singularity, 
\cite{De} uses an expression of the function by continued fractions, and, \cite{Sa} uses a geometric construction of the function. 
See Parad\'is, Viader and Bibiloni \cite[Section 1]{PVB} for details. 
This function is often defined by using continued fractions. 
\cite{dR} states this by using functional equations and our approach is investigating regularity of the solution of the functional equations in \cite{dR}.\\
(ii) Since $\text{Lip}(f_0) = \text{Lip}(f_1) = 1$,  
this case does not satisfy \cite[(A3) in Section 1]{O1}.
However, we can apply the technique in the proof of the author \cite[Lemma 3.5]{O2},  
because $0$ is not a fixed point of $x \mapsto x+1$ or $x \mapsto -1/(2+x)$.
The approach in \cite[Lemma 3.5]{O2} is different from the one in this paper. 
\cite[(A3) in Section 1]{O1} assures a stronger result than singularity, specifically, the Hausdorff dimension of $\mu_{G}$, $\dim_H(\mu_G)$ is strictly smaller than $1$.    
\end{Rem}

\begin{figure}[htbp]
\begin{minipage}{0.5\hsize}
\includegraphics[width = 6cm, height = 6cm]{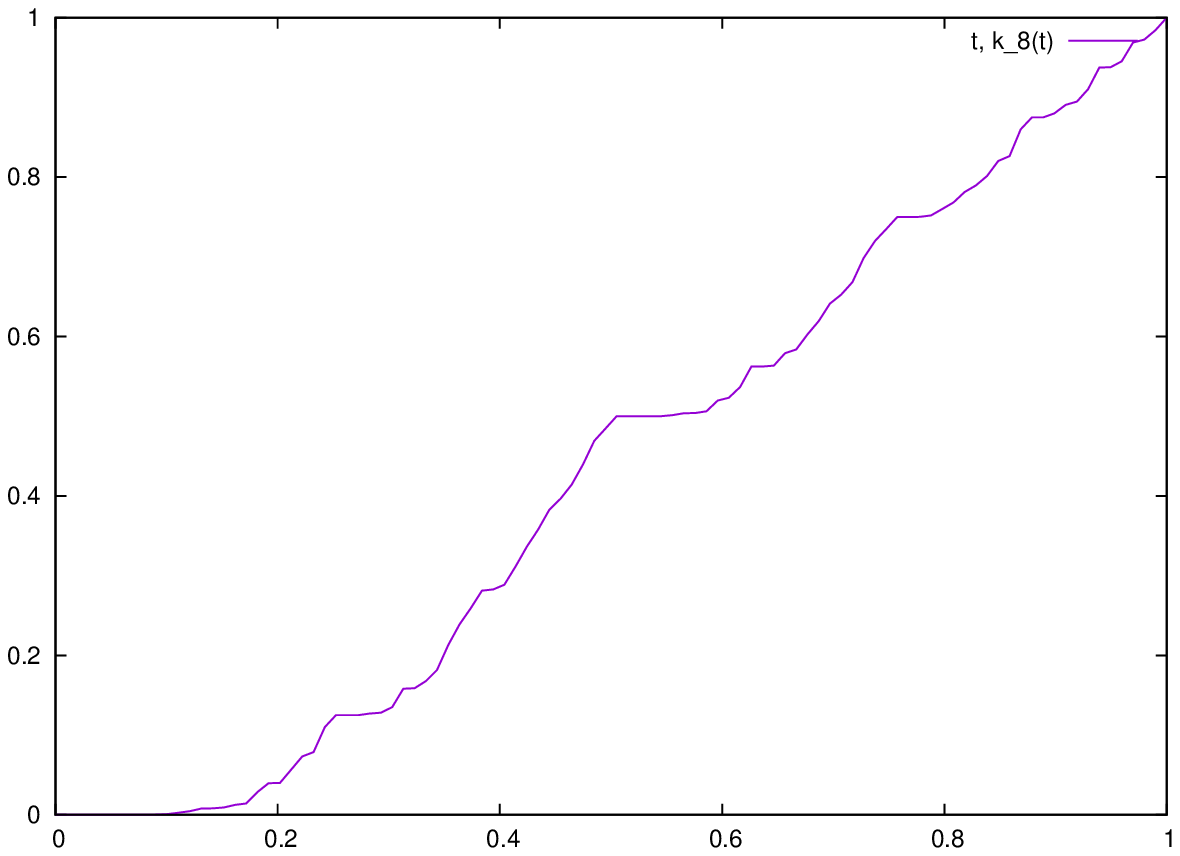}
\caption{\small $f_0(x) = x^{2}/2$ and \, \, \, \, \, \, \, \, $f_1(x) = (x + 1)/2$. }   
\end{minipage} 
\begin{minipage}{0.5\hsize}
\includegraphics[width = 6cm, height = 6cm]{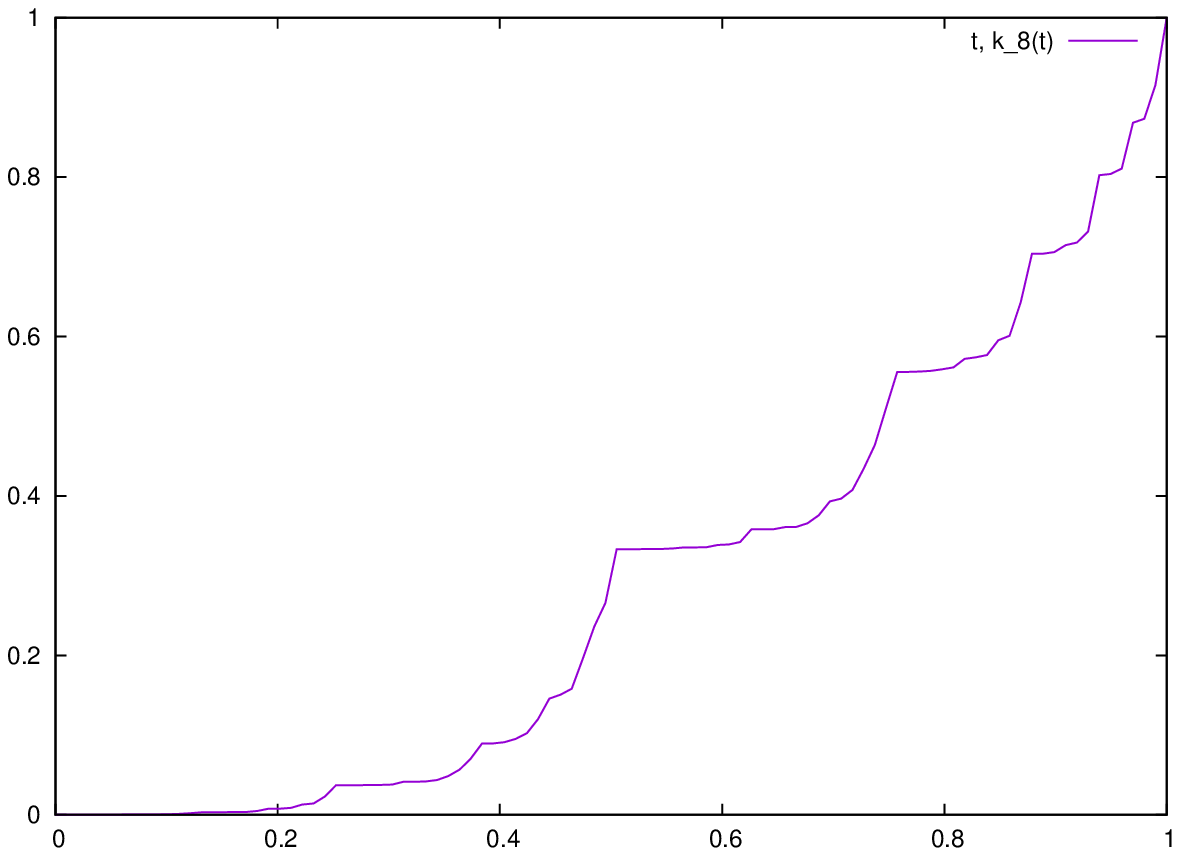}
\caption{\small $f_0(x) = x^3/4 + x/12$ and $f_1(x) = (2x + 1)/3$.  }   
\end{minipage}
\end{figure}

\subsection{Two-dimensional cases}  

In this subsection, we let $X = \mathbb{R}^2$, which is here identified with $\mathbb{C}$.
$i$ denotes the imaginary unit. $\overline{z}$ denotes the complex conjugate of $z \in \mathbb{C}$.                
We remark that if a function $f : \mathbb{C} \to \mathbb{C}$ is complex differentiable at $z$, then, $|f^{\prime}(z)| = \|Df(z)\| = \|(Df(z))^{-1}\|^{-1}$.    

First, we focus the case that both $f_0$ and $f_1$ are linear maps.   

\begin{Exa}[\text{\cite{dR}, \cite[Example in Section 6]{Ha}}]\label{exa-linear}
Let $\eta \in \mathbb{C}$ such that $|\eta| \le 1$ and $|1-\eta| \le 1$.  
Let $f_0(z) := (1-\eta)\overline{z}$ and $f_1(z) := (1- \eta)\overline{z} + \eta$.   
Then, $\alpha = \beta = -\log_2 |\eta(1-\eta)| / 2$.    
Theorem \ref{Dif}  implies that if $|\eta(1-\eta)| > 1/4$, then, the solution $G$ is not Fr\'echet differentiable, $\ell$-a.s. and, 
if $|\eta(1-\eta)| < 1/4$, then, the Fr\'echet derivative $DG$ is zero, $\ell$-a.s.  
\end{Exa}

We consider a case that neither $f_0$ nor $f_1$ is a linear map. 

\begin{Exa}\label{exa-nonlinear} 
Let $f_0(x) := z^2 (z + i) / (4(1+i))$ and $f_1(z) := (3z+1)/4$.  
Then, the complex derivatives of them are given by 
\[ f_0^{\prime}(z) = \frac{3z^2+2iz}{4(1+i)}, \text{ and, } f_1^{\prime}(z) = \frac{3}{4}, \, \, z \in \mathbb{C}. \]  
Therefore, 
\[ \text{Lip}(f_0) < 1,  \,  \text{Lip}(f_1) < 1,  \text{ and, } \text{Lip}(f_0)\text{Lip}(f_1) > 1/4,  \, \text{ on } \{z \in \mathbb{C} : |z| \le 1\}. \]

Since $|f_i| \le 1$ on $\{z : |z| \le 1\}$,     
the solution $G$ satisfies that $|G(t)| \le 1$ for any $t \in [0,1]$.    
By using (\ref{Def}), we have that 
$|G(t)| \le \sqrt{2}/4$ if $t \le 1/2$, and, 
$|G(t)| \le 2^{-5}$ if $t \le 1/4$. 
By using them,    
\begin{align*} \alpha &=  \frac{1}{2} \int_{0}^{1} -\log_2 \frac{9|G(t)| (3|G(t)| + 2)}{16 \sqrt{2}} \ell (dt) \\  
&\ge \frac{9}{4} - \frac{\log_2 15}{2} - \frac{1}{2}\int_0^1 \log_2 |G(t)| \ell (dt) > 1.  
\end{align*}

\begin{figure}[htbp]
\centering
\includegraphics[width = 12cm, height = 4cm]{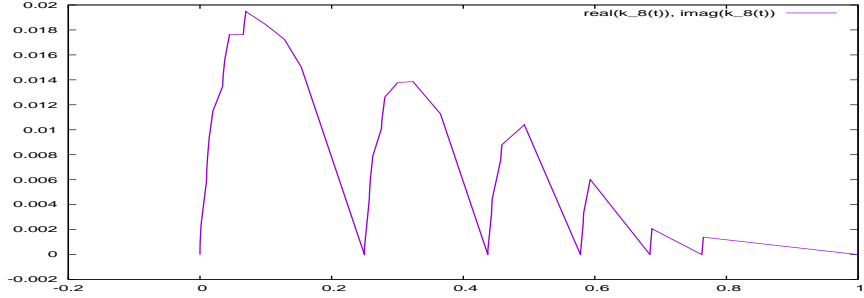}
\caption{Rough image of $G$ for $f_0(z) = z^2 (z + i) / (4(1+i))$ and $f_1(z) = (3z+1)/4$.  }
\end{figure}
\end{Exa}

\begin{Rem}[$\alpha < \beta$ can occur]\label{ineq}
Let 
$f_{0}(x+ iy) = x/2 + iy/3$ and $f_1(x+iy) = (x+1)/2$.     
Then, for any $z \in \mathbb{C}$, 
$\|Df_0(z)\| = 1/2$, $\|(Df_0(z))^{-1}\|^{-1} = 3$, $\|Df_1(z)\| = 1/2$, and, $\|(Df_1(z))^{-1}\|^{-1} = 0$.  
(We remark that none of $f_i$ are complex differentiable.)    
Hence, $\alpha = 1$ and $\beta = +\infty$.   
\end{Rem}

\subsection{Continuity of solutions} 

In this subsection, we consider perturbations of $f_i$ s.    
Fix $X$ and $m$.  
Denote $G, \alpha, \beta$ by $G_n, \alpha_n, \beta_n$, if $f_i = f_{n, i}$ for each $i$.    

\begin{Prop}[Continuity of solutions]\label{conti}   
We have that \\
(i) If $f_{n, i} \to f_{i}$, $n \to \infty$,  uniformly on $X$ for each $i$,       
then, $G_{n}(t) \to G(t)$, $n \to \infty$, uniformly with respect to $t$.\\
(ii) Assume that the assumption in (i) is satisfied, and, 
\[ \sup_{i \in \{0, \dots, m-1\},  x \in X} \| Df_{n, i}(x) - Df_{i}(x) \| \to 0,\] 
then,   
\[ \liminf_{n \to \infty} \alpha_{n} \ge \alpha, \text{ and, } \liminf_{n \to \infty} \beta_n \ge \beta.\] 
(iii)  Assume that the assumptions in (i) and (ii) are satisfied, 
and, \[ \sup_{i, x} \|Df_i(x)^{-1}\| < +\infty,\]    
then, 
\[ \lim_{n \to \infty} \alpha_{n} = \alpha, \text{ and, } \lim_{n \to \infty} \beta_n = \beta.\]        
\end{Prop}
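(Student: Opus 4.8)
The plan is to prove (i) first and then deduce (ii) and (iii) from it by classical convergence theorems, since the integrands defining $\alpha_n$ and $\beta_n$ are built from $G_n$ together with the derivatives $Df_{n,i}$. For (i) I would iterate the functional equation (\ref{Def}). Writing $\mathbf a = (A_1(t),\dots,A_k(t))$, $f_{\mathbf a} := f_{A_1(t)}\circ\cdots\circ f_{A_k(t)}$ and $f_{n,\mathbf a} := f_{n,A_1(t)}\circ\cdots\circ f_{n,A_k(t)}$, the $k$-fold iteration gives $G(t)=f_{\mathbf a}(G(H^k t))$ and $G_n(t)=f_{n,\mathbf a}(G_n(H^k t))$. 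Splitting the difference through the intermediate term $f_{n,\mathbf a}(G(H^k t))$ and using that each $f_{n,\mathbf a}$ is a weak contraction (hence does not increase diameters), I obtain, with $K:=G([0,1])$ and $K_n:=G_n([0,1])$,
\[
\sup_t |G_n(t)-G(t)| \le \sup_{\mathbf a}\mathrm{diam}\big(f_{n,\mathbf a}(K_n\cup K)\big) + \sup_{\mathbf a,\, x\in K}\big|f_{n,\mathbf a}(x)-f_{\mathbf a}(x)\big|.
\]
For fixed $k$ the second term tends to $0$ as $n\to\infty$, because a finite composition of maps converging uniformly on $X$ converges uniformly on the compact set $K$. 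For the first term, fixed-point stability (which for weak contractions follows from a compactness argument using $f_{n,i}\to f_i$ uniformly) gives $K_n\to K$ in the Hausdorff metric, so letting $n\to\infty$ the first term converges to $\sup_{\mathbf a}\mathrm{diam}(f_{\mathbf a}(K))$, which is exactly the scale-$m^{-k}$ modulus of continuity of the de Rham solution $G$ and tends to $0$ as $k\to\infty$ by \cite[Theorem 6.5]{Ha}. Taking $\limsup_n$ and then $k\to\infty$ yields $G_n\to G$ uniformly.

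For (ii), the additional hypothesis $\sup_{i,x}\|Df_{n,i}(x)-Df_i(x)\|\to 0$ together with (i) and the uniform continuity of $Df_i$ in (A-iv) gives
\begin{align*}
\sup_t \big\|Df_{n,A_1(t)}(G_n(Ht)) - Df_{A_1(t)}(G(Ht))\big\|
&\le \sup_{i,x}\|Df_{n,i}(x)-Df_i(x)\| \\
&\quad + \sup_i\sup_t \big\|Df_i(G_n(Ht)) - Df_i(G(Ht))\big\| \to 0,
\end{align*}
the second term vanishing because $G_n(Ht)\to G(Ht)$ uniformly in $t$ and $Df_i$ is uniformly continuous. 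Hence $\|Df_{n,A_1(t)}(G_n(Ht))\|\to\|Df_{A_1(t)}(G(Ht))\|$ uniformly in $t$; since the conorm $\|(\cdot)^{-1}\|^{-1}$ is $1$-Lipschitz with respect to the operator norm, also $\|(Df_{n,A_1(t)}(G_n(Ht)))^{-1}\|\to\|(Df_{A_1(t)}(G(Ht)))^{-1}\|$ in $[1,+\infty]$ uniformly in $t$. Thus the integrands defining $\alpha_n$ and $\beta_n$ converge pointwise to those of $\alpha$ and $\beta$; as $\|Df_{n,i}(x)\|\le 1$ (so $\|(Df_{n,i}(x))^{-1}\|\ge 1$) both integrands are nonnegative, and Fatou's lemma gives $\liminf_n\alpha_n\ge\alpha$ and $\liminf_n\beta_n\ge\beta$.

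For (iii), the bound $\sup_{i,x}\|Df_i(x)^{-1}\|=:C<+\infty$ forces the conorm of $Df_i(x)$ to be at least $1/C$, hence $1/C\le\|Df_i(x)\|\le 1$; by the uniform convergence $Df_{n,i}\to Df_i$ the same holds for $f_{n,i}$ with $1/C$ replaced by $1/(2C)$ for all large $n$. Consequently the integrands defining $\alpha_n$ and $\beta_n$ are uniformly bounded (by $\log_m(2C)$) in addition to converging pointwise, so the dominated convergence theorem upgrades the $\liminf$ of (ii) to $\lim_n\alpha_n=\alpha$ and $\lim_n\beta_n=\beta$.

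The main obstacle is step (i): because the $f_i$ are only weak contractions, there is no uniform contraction ratio, so a single use of the functional equation yields only the vacuous bound $\sup_t|G_n(t)-G(t)|\le \sup_{i,x}|f_{n,i}(x)-f_i(x)| + \sup_t|G_n(t)-G(t)|$. The substance is to pass to the $k$-fold iterate and to control the shrinking of the cylinder diameters \emph{uniformly in the perturbation parameter $n$}; this is precisely where the stability of the fixed points and attractors under the hypotheses of (i) — itself a compactness argument forced by the absence of a uniform contraction ratio — must be justified with care, especially when $X$ is unbounded and one must first produce a common compact set containing $K$ and all $K_n$ for large $n$.
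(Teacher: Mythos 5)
Your proofs of (ii) and (iii) are essentially the paper's: the triangle inequality
$\|Df_{n,A_1(t)}(G_n(Ht))-Df_{A_1(t)}(G(Ht))\| \le \sup_{i,x}\|Df_{n,i}(x)-Df_i(x)\| + \|Df_{A_1(t)}(G_n(Ht))-Df_{A_1(t)}(G(Ht))\|$
combined with (i) and (A-iv), the $1$-Lipschitz conorm, nonnegativity of the integrands plus Fatou for (ii), and the uniform bound $-\log_m\|Df_i(x)\|\le\log_m\|Df_i(x)^{-1}\|<+\infty$ for (iii) (the paper concludes (iii) via uniform convergence of the logarithms rather than dominated convergence, which is the same point).

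The problem is (i). You reject the one-step decomposition as ``vacuous,'' but it is exactly the paper's proof, and it is not vacuous, because ``weak contraction'' in Hata's framework is quantitative: the modulus $\alpha(f,t):=\sup\{|f(x)-f(y)| : |x-y|\le t\}$ is right-continuous and satisfies $\alpha(f,t)<t$ for all $t>0$. With $L_n:=\max_t|G_n(t)-G(t)|$ (finite for each $n$, since $G_n$ and $G$ are continuous on $[0,1]$) and $\delta_n:=\sup_{i,x}|f_{n,i}(x)-f_i(x)|\to0$, the one-step identity gives $L_n\le\delta_n+\max_i\alpha(f_i,L_n)$, hence $L:=\limsup_n L_n$ satisfies $L\le\max_i\alpha(f_i,L)$, which forces $L=0$; no iteration, no attractor stability, and no common compact set are needed. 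Your $k$-fold iteration could be completed, but its crux --- that $K_n\to K$ in the Hausdorff metric and that the $K_n$ eventually lie in a common compact set --- is precisely the step you leave as ``a compactness argument \dots\ justified with care,'' and the natural way to prove that stability is the same modulus estimate applied to $d_H(K_n,K)\le\delta_n+\max_i\alpha(f_i,d_H(K_n,K))$, at which point the direct one-step argument for $G_n-G$ is already available. So as written, part (i) rests on an unproved lemma, and the stated reason for taking the longer route (that the short one fails) is mistaken.
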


\begin{proof}
\[ G_{n}(t) - G(t) = f_{n, A_1(t)}(G_{n}(Ht)) - f_{A_1(t)}(G(Ht)). \]     
\[ = f_{n, A_1(t)}(G_n(Ht)) - f_{A_1(t)}(G_n(Ht)) + f_{A_1(t)}(G_n(Ht)) - f_{A_1(t)}(G(Ht)). \]

By using this and the assumption, 
\[ \limsup_{n \to \infty} |G_{n}(t) - G(t)| \le \limsup_{n \to \infty} \left|f_{A_1(t)}(G_{n}(Ht)) - f_{A_1(t)}(G(Ht))\right|. \]
Hence, 
\[ \limsup_{n \to \infty} \max_{t \in [0,1]} |G_{n}(t) - G(t)| \le \limsup_{n \to \infty} \max_{i \in \{0, \dots, m-1\}, t \in [0,1]} \left|f_{i}(G_{n}(t)) - f_{i}(G(t))\right|. \]
Since each $f_i$ is a weak contraction, we have assertion (i).   

Now we show (ii).  
By using the assumption, 
$\|Df_{n, A_1(t)}(G_n(Ht)) - Df_{A_1(t)}(G(Ht))\| \to 0$, as $n \to \infty$, uniformly with respect to $t$.    
Since \[ \sup_{i \in \{0, \dots, m-1\}, x \in X} \|Df_{i}(x)\| \le 1, \text{ and, }  \inf_{i \in \{0, \dots, m-1\}, x \in X} \|Df_{i}(x)^{-1}\| \ge 1, \] 
we have that for each $t$ and for all but finitely many positive integers $n$, 
\[ -\log_m \|Df_{n, A_1(t)}(G_n(Ht))\| \ge 0, \text{ and,  } \log_m \left\|Df_{n, A_1(t)}(G_n(Ht))^{-1} \right\| \le 0. \]   
By using them and Fatou's lemma,  
we have assertion (ii).

Now we show (iii).   
By using the assumption, we have that 
\[ \sup_{i \in \{0, \dots, m-1\}, x \in X}  -\log_m \|Df_i (x)\| \le  \sup_{i \in \{0, \dots, m-1\}, x \in X}  \log_m \|Df_i (x)^{-1}\| < +\infty.\]   
Therefore, \[ \log_m\|Df_{n, A_1(t)}(G_n(Ht))\| \to \log_m \|Df_{A_1(t)}(G(Ht))\|, \textup{ and, }  \]
\[ \log_m\|Df_{n, A_1(t)}(G_n(Ht))^{-1}\| \to \log_m \|Df_{A_1(t)}(G(Ht))^{-1}\|, \] 
as $n \to \infty$, uniformly with respect to $t$. 
Thus we have assertion (iii).   
\end{proof}

We assume many conditions in (iii), but some examples satisfy the conditions.    
\begin{Exa}[Perturbation]\label{perturb}
Let $X = [0,1]$ and $m=2$. Then, 
for sufficiently small $\epsilon$, \\
(i) If we let $f_{\epsilon, 0}(x) := x^{2}/2 - \epsilon x^{4}$ and $f_{\epsilon, 1}(x) := ((1 - \epsilon) x + (1 + \epsilon))/2$, 
then, $\alpha_{\epsilon} > 1$.\\  
(ii) If we let $f_{\epsilon, 0}(x) := x/(1+x) - \epsilon x(1-x)$ and $f_{\epsilon, 1}(x) := 1/(2-x) + \epsilon x^2(1-x)$,  
then, $\alpha_{\epsilon} > 1$.  
\end{Exa}


\section{Proofs}

\begin{Lem}\label{erg}
\begin{align*}
\left(\|(Df_{A_1(t)}(G(Ht)))^{-1}\|^{-1} + o(1)\right)M_{n}(Ht) &\le M_{n+1}(t) \\
&\le \left(\|Df_{A_1(t)}(G(Ht))\| + o(1)\right)M_{n}(Ht),  
\end{align*}
as $n \to \infty$.  
Here the small orders do not depend on $t$.   
\end{Lem}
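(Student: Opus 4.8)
The plan is to turn the two-sided estimate into a single-step recursion by applying the defining equation \eqref{Def} once and then expanding $f_{A_1(t)}$ to first order, uniformly in $t$. First I would rewrite $M_{n+1}(t)$ at the shifted point. Put $i:=A_1(t)$ and use the digit identity $A_k(Ht)=A_{k+1}(t)$ to compute $t_{n+1}=(i+(Ht)_n)/m$ and $t_{n+1}+m^{-(n+1)}=(i+(Ht)_n+m^{-n})/m$; since $0\le (Ht)_n\le (Ht)_n+m^{-n}\le 1$, both points lie in $[i/m,(i+1)/m]$, and $mt_{n+1}-i=(Ht)_n$. Applying \eqref{Def} on that subinterval gives $G(t_{n+1})=f_i(G((Ht)_n))$ and $G(t_{n+1}+m^{-(n+1)})=f_i(G((Ht)_n+m^{-n}))$. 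Writing $x:=G((Ht)_n)$, $y:=G((Ht)_n+m^{-n})$ and $p:=G(Ht)$, this yields exactly $M_{n+1}(t)=|f_i(y)-f_i(x)|$ and $M_n(Ht)=|y-x|$, so everything reduces to comparing $|f_i(y)-f_i(x)|$ with $|y-x|$.

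Next I would show that $x$ and $y$ approach $p$ at a rate independent of $t$. Because $(Ht)_n\le Ht\le (Ht)_n+m^{-n}$, both $|(Ht)_n-Ht|$ and $|(Ht)_n+m^{-n}-Ht|$ are at most $m^{-n}$, so $\max(|x-p|,|y-p|)\le \omega_G(m^{-n})=:\delta_n$, where $\omega_G$ is the modulus of continuity of the uniformly continuous solution $G$ and $\delta_n\to 0$ uniformly in $t$. The key step is then the uniform expansion $f_i(y)-f_i(x)=Df_i(p)(y-x)+r_n(t)$ with $|r_n(t)|\le \varepsilon_n|y-x|$ and $\varepsilon_n\to 0$ uniformly in $t$. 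I would obtain this from the mean value inequality: since the segment from $x$ to $y$ lies in $X$ (here $X$ is convex),
\[ f_i(y)-f_i(x)-Df_i(p)(y-x)=\int_0^1\bigl(Df_i(x+s(y-x))-Df_i(p)\bigr)(y-x)\,ds, \]
and every point $x+s(y-x)$ is within $\delta_n$ of $p$, so by (A-iv) the integrand has norm at most $\omega_{Df_i}(\delta_n)\,|y-x|$, giving $\varepsilon_n:=\max_i \omega_{Df_i}(\delta_n)\to 0$.

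Finally I would combine this with the elementary operator bounds $\|(Df_i(p))^{-1}\|^{-1}|y-x|\le |Df_i(p)(y-x)|\le \|Df_i(p)\|\,|y-x|$, the lower one being the inequality $|Av|\ge \|A^{-1}\|^{-1}|v|$ valid for every $A\in B(E,E)$ under the convention $\|A^{-1}\|^{-1}=0$ for non-invertible $A$. The triangle inequality then gives $M_{n+1}(t)\le (\|Df_i(p)\|+\varepsilon_n)M_n(Ht)$ and $M_{n+1}(t)\ge (\|(Df_i(p))^{-1}\|^{-1}-\varepsilon_n)M_n(Ht)$, which is the assertion, since $i=A_1(t)$, $p=G(Ht)$, and the $\varepsilon_n$ depend only on $n$.

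I expect the main obstacle to be precisely the uniformity of the first-order error in $t$. A pointwise use of differentiability at $p$ alone would only control the error by $o(|x-p|)+o(|y-p|)$, which need not be $o(|y-x|)$ because $p$ need not lie on the segment $[x,y]$; it is therefore essential to expand \emph{along} the segment, which is where the uniform continuity (A-iv) of $Df_i$ enters, together with the convexity of $X$ guaranteeing that $x+s(y-x)$ stays in the domain of $f_i$.
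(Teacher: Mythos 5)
Your proposal is correct and follows essentially the same route as the paper: apply the functional equation once to reduce $M_{n+1}(t)$ and $M_n(Ht)$ to $|f_{A_1(t)}(y)-f_{A_1(t)}(x)|$ and $|y-x|$ with $x=G((Ht)_n)$, $y=G((Ht)_n+m^{-n})$, then use the integral form of the mean value theorem along the segment and the uniform continuity of $Df_i$ from (A-iv) to replace the derivative along the segment by $Df_{A_1(t)}(G(Ht))$ uniformly in $t$. Your explicit remark that pointwise differentiability at $G(Ht)$ would not suffice, and that the segment must stay in the domain, is a fair observation but does not change the argument.
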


\begin{proof}
We can rewrite (\ref{Def}) as 
\begin{equation}\label{rewrite}  
G(t) = f_{A_1(t)}(G(Ht)), \, t \in [0,1).  
\end{equation}

If $H(t_{n+1} + m^{-n-1}) > 0$, then, $Ht_{n+1} = (Ht)_n$ and $H(t_{n+1} + m^{-n-1}) = (Ht)_n + m^{-n}$.   
If $H(t_{n+1} + m^{-n-1}) = 0$, then, $Ht_{n+1} = (Ht)_n$ and $(Ht)_n + m^{-n} = 1$. 
By using them and (\ref{rewrite}),    
\begin{equation}\label{f2} 
M_{n+1}(t) = \left| f_{A_1(t)}(G((Ht)_n + m^{-n})) - f_{A_1(t)}(G((Ht)_n)) \right| \text{ and, }  
\end{equation}
\begin{equation}\label{f3}
M_{n}(Ht) = \left| G((Ht)_n + m^{-n}) - G((Ht)_n) \right|. 
\end{equation}

By using (\ref{f2}) and the mean value theorem,   
\begin{multline}\label{mv}
M_{n+1}(t) =  \Bigg| \int_{0}^{1} Df_{A_1(t)}\Big( (1-u) G((Ht)_n) + u G((Ht)_n + m^{-n})\Big) \\
 \Big(G((Ht)_n + m^{-n}) - G((Ht)_n) \Big) \ell(du) \Bigg| 
\end{multline}
Since $Df_{i}(x)$ is uniformly continuous with respect to $x$ (under the operator norm of $E$),    
\[ \lim_{n \to \infty} \max_{u \in [0,1]} \left\| Df_{A_1(t)}( (1-u) G((Ht)_n) + u G((Ht)_n + m^{-n})) - Df_{A_1(t)}(G(Ht)) \right\| = 0. \]
This convergence is uniform with respect to $t$.  
By using this, (\ref{f3}) and (\ref{mv}),   
we have the assertion. 
\end{proof}

Let $a \vee b$ be the maximum of real numbers $a$ and $b$.  

\begin{proof}[Proof of Theorem \ref{Basic}]  
Let $I(t) := \|Df_{A_1(t)}(G(Ht))\|$.  
Then, by using Lemma \ref{erg}, 
for any $\epsilon > 0$, there exists $k$ such that 
\[ M_{n}(t) \le M_{k}(H^{n-k}t) \prod_{i=0}^{n-k} I(H^i t)(1+\epsilon), \, \text{ for any } t \in [0,1)  \text{ and any } n > k.  \]
By using this and Birkoff's ergodic theorem (cf. Pollicott and Yuri \cite[Theorem 10.2]{PY}), 
for any $\delta > 0$,  
\begin{align*} 
\liminf_{n \to \infty} \frac{-\log_m M_n(t)}{n} 
&\ge \liminf_{n \to \infty} \frac{1}{n} \sum_{i=0}^{n-k} -\log_m (I(H^i t) \vee \delta) - \log_m (1+\epsilon).  \\
&\ge \int_{0}^{1} -\log_m (I \vee \delta) d\ell  - \log_m (1+\epsilon),  \, \text{ $\ell$-a.s.$t$.}  
\end{align*}

By using the monotone convergence theorem,   
\[ \lim_{\delta \to 0, \delta > 0} \int_{0}^{1} -\log_m (I \vee \delta) d\ell = \int_{0}^{1} -\log_m I(t) d\ell(t) = \alpha.\]      
By letting $\epsilon \to 0$, we have (\ref{lower}).     

We can show (\ref{upper}) in the same manner.  
\end{proof}

\begin{proof}[Proof of Theorem \ref{Dif}]
We first show assertion (ii).   
Assume $\beta < 1$. 
Then, Theorem \ref{Basic} implies that $m^n M_n(t) \to +\infty$, $n \to \infty$, $\ell$-a.s.$t$.    
\[ m^n M_n(t) \le \max\left\{ \frac{|G(t) - G(t_n)|}{t - t_n},  \frac{|G(t_n + m^{-n}) - G(t)|}{t_{n} + m^{-n} -  t} \right\}.\]
Hence, 
\[ \limsup_{s \to t} \frac{|G(t) - G(s)|}{|t-s|} \ge \limsup_{n \to \infty} m^n M_n(t) = +\infty, \, \text{ $\ell$-a.s. $t$}. \]
Thus we have (ii).

We second show assertion (i).  
Assume $\alpha > 1$. 
Let 
\[ I_{\epsilon}(t) := \sup\left\{\|Df_{A_1(t)}(x) \| : x \in X,  |x - G(Ht)| \le \epsilon \right\}, \, t \in [0,1]. \]     
This is a non-negative Borel measurable function since $E$ is separable.     
We remark that $| I_{\epsilon}(t)| \le 1$.   

By using the assumptions (A-iii) and (A-iv),   
$\lim_{\epsilon \to 0, \epsilon > 0} I_{\epsilon}(t) = I(t)$. 
By using this and the monotone convergence theorem,   
\[ \lim_{\epsilon \to 0, \epsilon > 0} \int_{[0,1)} -\log_m I_{\epsilon} d\ell  = \alpha. \]
By using this and $\alpha > 1$, 
there exists $\epsilon_0 > 0$ such that   
\begin{equation}\label{alpha} 
\int_{[0,1)} -\log_m I_{\epsilon_0} d\ell  > \frac{\alpha + 1}{2}.  
\end{equation} 
By using Birkoff's ergodic theorem again,  
\begin{equation}\label{conv-alpha} 
\lim_{n \to \infty} \frac{1}{n} \sum_{i=0}^{n-1} -\log_m I_{\epsilon_{0}} (H^i t)  =  \int_{[0,1)} -\log_m I_{\epsilon_0} d\ell,   \text{ $\ell$-a.s.$t$.} 
\end{equation} 
$\int_{[0,1)} -\log_m I_{\epsilon_0} d\ell$ can take $+\infty$, but the above convergence hold in the case.

Let $t$ be an $m$-normal number such that the above convergence holds. 
Here $t$ is $m$-normal means that $\lim_{n \to \infty} |\{k \in \{1, \dots, n\}: A_k(t) = i\}|/n = 1/m$ for each $i \in \{0, \dots, m-1\}$. 
Then, by noting (\ref{alpha}) and (\ref{conv-alpha}), 
there exists $M(t)$ such that   
\begin{equation}\label{alpha-est} 
\prod_{i=0}^{n-1} I_{\epsilon_{0}} (H^i t)   \le m^{-n(\alpha + 3)/4} \, \text{ for any $n \ge M(t)$.}       
\end{equation}

Let $n(t,s)$ be the minimum number such that $A_{n}(t) \ne A_n(s)$. 
Since $z$ is $m$-normal, $\lim_{s \to t} n(t,s) = +\infty$.  
Let 
\[ I(n, t, s) := \max_{u \in [0,1]} \left\| Df_{A_n(t)}(uG(H^n t) + (1-u)G(H^n s)) \right\|. \]
By using (\ref{rewrite}) and the mean value theorem,   
\begin{equation}\label{induc}
\left|G(H^{n-1} t) - G(H^{n-1} s)\right|  \le I(n, t, s) \left|G(H^n t) - G(H^n s)\right|, \, n \ge 1.  
\end{equation} 

Since 
\[ |H^n t - H^n s| \le m^{-n(t,s)^{1/2}}, \, \, \forall n \le n(t,s) - n(t,s)^{1/2},  \]   
$|G(H^n t) - G(H^n s)| \le \epsilon_0$ holds if $s$ is sufficiently close to $t$ (that is, $n(t,s)$ is sufficiently large).  
Therefore,    
\[ I(n, t, s)\le I_{\epsilon_0}( H^{n}t), \, \forall n \le n(t,s) - n(t,s)^{1/2}. \]

By using this and (\ref{alpha-est}), 
\[ \prod_{n=1}^{n(t,s)- n(t,s)^{1/2}} I(n, t, s) \le  m^{-(\frac{\alpha + 3}{4}) (n(t,s)- n(t,s)^{1/2})}. \]
holds if $s$ is sufficiently close to $t$. 
By using this and (\ref{induc}),               
\begin{align}\label{ts-upper}    
|G(t) - G(s)| &\le \left|G(H^{n(t,s)- n(t,s)^{1/2}} t) - G(H^{n(t,s)- n(t,s)^{1/2}} s) \right| \prod_{n=1}^{n(t,s)- n(t,s)^{1/2}} I(n, t, s) \notag \\   
&\le 2\max_{s \in [0,1]} |G(s)| \cdot m^{-(\frac{\alpha + 3}{4}) (n(t,s)- n(t,s)^{1/2} )}.  
\end{align}   

We give a lower bound for $|t-s|$. 
Let $N(t,s) > n(t,s)$ be the minimum number 
such that $A_{N(t,s)}(t) \ge 1$ if $t > s$ and $A_{N(t,s)}(t) \le m-2$ if $t < s$.        
Since $z$ is $m$-normal, 
we have that $\lim_{s \to t} N(t,s)/n(t,s) = 1$ and 
\[ |t - s| \ge m^{-N(t,s)} = m^{-n(t,s)(1 + o(1))}. \] 

By using this, (\ref{ts-upper}) and $\alpha > 1$, 
\[ \lim_{s \to t} \frac{|G(s) - G(t)|}{|s-t|} = 0,  \text{ $l$-a.s. $t$.} \] 
Thus we have assertion (i).  
\end{proof}

\begin{proof}[Proof of Corollary \ref{Var}] 
We have that 
\[ \sum_{k=1}^{m^{n}} \left| G\left(\frac{k}{m^n}\right) - G\left(\frac{k-1}{m^n}\right) \right|^p = m^{n} \int_{[0,1)} M_n^p d\ell. \]
Let $\epsilon > 0$ such that $p(\beta + \epsilon) < 1$. 
Thanks to (\ref{upper}),    
\[ \lim_{n \to \infty} \ell\left(M_n \ge m^{-n(\beta + \epsilon)}\right) = 1. \]
Hence, 
\[ m^{n} \int_{[0, 1)} M_n^p d\ell \ge m^{(1 - \beta - \epsilon)n}  \ell \left(M_n \ge m^{-n(\beta + \epsilon)}\right) \to +\infty, \, n \to \infty. \]  
\end{proof} 

\section{Remarks}  

\begin{Rem}   
(i) 
If $X = [0,1]$ and the solution $G$ is increasing and $\alpha > 1$, then, $\mu_{G}$ is a singular measure on $[0,1]$. 
However, we are not sure 
whether stronger results than singularity, 
for example, $\dim_{H}(\mu_{G}) < 1$, and, characterizing the points such that the derivative $DG$ is zero or infinity, etc.,  
can be shown by using Theorem \ref{Basic}. \\
(ii) 
Let $f_{\epsilon, 0}(x) = (1/2 +\epsilon) x$ and $f_{\epsilon, 1}(x) = (1/2 - \epsilon) x + 1/2 + \epsilon$.  
Let $G_{\epsilon}$ be the solution of (\ref{Def}) for $f_i = f_{\epsilon, i}$.    
Then, the partial derivative $\partial_{\epsilon} G_{\epsilon}(x)$ with respect to $\epsilon$ at $0$ gives the Takagi function. (Cf. Hata and Yamaguti \cite{HY}.)
It may be interesting to consider the partial derivative of $\partial_{\epsilon} G_{\epsilon}(x)$ for general $(f_{\epsilon, 0}, f_{\epsilon, 1})$.   
This may give a certain generalization of the Takagi function.\\ 
(iii) In Section 1, 
we assume that $X$ is a closed subset of a separable Banach space, 
but, if $X$ is a closed subset of a connected Riemannian manifold, 
we can define derivatives and variations of $G$. 
The corresponding regularity assumptions (A-iii) and (A-iv) for $f_i$ s are that $f_i$ s are $C^1$ maps between the interior of $X$. 
The Riemannian metric corresponds to the norm of $E$.  
\end{Rem}

\end{document}